\documentclass[a4paper]{amsart}

\usepackage{a4wide}
\usepackage[T1]{fontenc}
\usepackage[utf8]{inputenc}
\usepackage{lmodern}
\usepackage{microtype}
\usepackage{amsmath,amssymb,mathtools}
\usepackage{xcolor}
\usepackage[
  colorlinks=true,
  linkcolor=blue!55!black,
  citecolor=blue!55!black,
  urlcolor=blue,
  pdftitle={Pointwise function spaces over compacta are not weak Banach spaces},
  pdfauthor={Tomasz Kania and Jerzy K\k{a}kol}
]{hyperref}

\newtheorem{theorem}{Theorem}[section]
\newtheorem{proposition}[theorem]{Proposition}
\newtheorem{lemma}[theorem]{Lemma}
\newtheorem*{theoremA}{Theorem A}
\newtheorem*{corollaryB}{Corollary B}

\newcommand{\R}{\mathbb{R}}
\newcommand{\N}{\mathbb{N}}
\newcommand{\abs}[1]{\lvert #1\rvert}
\newcommand{\norm}[1]{\lVert #1\rVert}
\newcommand{\Fin}{\mathcal F}

\title[Pointwise and weak topologies]{Pointwise function spaces over compacta\\
are not weak Banach spaces}

\author[T.~Kania]{Tomasz Kania}
\address[T.~Kania]{Mathematical Institute\\Czech Academy of Sciences\\
\v Zitn\'a 25 \\115 67 Praha 1\\Czech Republic  and  Institute of
Mathematics and Computer Science\\ Jagiellonian University\\
{\L}ojasiewicza 6, 30-348 Krak\'{o}w, Poland
}
\email{kania@math.cas.cz, tomasz.marcin.kania@gmail.com}
\thanks{RVO: 67985840.}

\author[J.~K\k{a}kol]{Jerzy K\k{a}kol}
\address[J.~K\k{a}kol]{Faculty of Mathematics and Computer Science\newline
Adam Mickiewicz University\newline
Uniwersytetu Pozna\'nskiego 4, 61-614 Pozna\'n, Poland}
\email{kakol@amu.edu.pl}

\subjclass[2020]{Primary 46E10, 54C35; Secondary 46E15, 46A50}
\keywords{space of continuous functions, pointwise topology, weak topology,
compact space, Banach space, basic sequence, finite-support relation,
$\Delta$-system}

\begin{document}

\begin{abstract}
Let $K$ be a compact Hausdorff space and let $E$ be an
infinite-dimensional real Banach space.  We prove that there is no
continuous bijection $h\colon C_p(K)\to E_w$ whose inverse is continuous
at $h(0)$.  Consequently, $C_p(K)$ and $C_w(L)$ are not homeomorphic for
any infinite compact Hausdorff spaces $K$ and $L$.  This settles
Krupski's problem and its two-space version due to Krupski and
Marciszewski, and answers a question of K\k{a}kol, Leiderman, and Michalak
concerning $C_p([0,1])$ and weak Banach spaces.
\end{abstract}

\maketitle

\section{Introduction}

For a Tychonoff space $X$, let $C_p(X)$ denote the space of continuous
real-valued functions on $X$ endowed with the topology of pointwise
convergence.  If $E$ is a real Banach space, $E_w$ denotes $E$ endowed
with its weak topology.  For compact Hausdorff $K$, we write $C_w(K)$
for $C(K)_w$, where $C(K)$ carries its usual supremum norm before the
weak topology is imposed.  The notation $X\cong Y$ means that the
topological spaces $X$ and $Y$ are homeomorphic; no linearity is implied.

The topological classification of spaces $C_p(X)$ is a central part of
the programme initiated by Arkhangel'ski\u{\i}; see, for example,
\cite{Arkhangelskii1992}.  The precise weak-versus-pointwise question
considered here appears in the published literature as a problem of
Krupski.  In \cite[Problem~2]{Krupski2016} he asked whether
\begin{equation}\label{eq:one-space-problem}
                         C_p(K)\cong C_w(K)
\end{equation}
can hold for an infinite compact space $K$.  Krupski and Marciszewski
then isolated the two-space form \cite[Problem~1.2]{KrupskiMarciszewski2017}:
can $C_p(K)$ and $C_w(L)$ be homeomorphic for infinite compact spaces
$K$ and $L$?  Although this comparison belongs naturally to
Arkhangel'ski\u{\i}'s wider programme, we know no published source that
attributes the precise problem \eqref{eq:one-space-problem} to him.

Several substantial negative results were previously known.  Krupski
settled the one-space problem for compact metrisable $C$-spaces, and
hence for compact finite-dimensional metrisable spaces
\cite[Corollary~1]{Krupski2016}.  Krupski and Marciszewski treated, among
other cases, finite-dimensional Valdivia compacta
\cite[Corollary~4.5]{KrupskiMarciszewski2017}.  They also observed that, when
$K$ is scattered, the two-space case follows from the
Fr\'{e}chet--Urysohn property of $C_p(K)$ and its failure for $C_w(L)$
when $L$ is infinite
\cite[item~(D), p.~648]{KrupskiMarciszewski2017}.  Uniform mapping
problems were developed by G\'{o}rak, Krupski, and Marciszewski
\cite{GorakKrupskiMarciszewski2019}.  In a broader Banach-space
direction, K\k{a}kol, Leiderman, and Michalak proved that a homeomorphism
$C_p(X)\cong E_w$ forces $X$ to be a countable union of compacta, at
least one of them non-scattered, and forces $E$ to contain an isomorphic
copy of $\ell_1$ \cite{KakolLeidermanMichalak2022}.  They asked in
\cite[Problem~3.10]{KakolLeidermanMichalak2022} whether $C_p([0,1])$ can be
homeomorphic to $E_w$ for some separable Banach space $E$.

Our main result removes all additional assumptions when the source is
defined over a compact space.  It also weakens the homeomorphism
hypothesis.

\begin{theoremA}
Let $K$ be a compact Hausdorff space and let $E$ be an
infinite-dimensional real Banach space.  There is no continuous
bijection
\[
                              h\colon C_p(K)\longrightarrow E_w
\]
whose inverse is continuous at $h(0)$.
\end{theoremA}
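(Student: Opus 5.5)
The plan is to play the two kinds of \emph{finite-support} behaviour against each other. In $C_p(K)$, basic neighbourhoods of $0$ are controlled by finite subsets of $K$. In $E_w$, they are controlled by finite subsets of $E^*$, and each contains a finite-codimensional, hence infinite-dimensional, subspace. The proof will then use the norm-boundedness structure of $E$, which has no counterpart in $C_p(K)$. Fix $h$ as in Theorem A. Translating in $E$, we may assume $h(0)=0$.

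\emph{Step 1 (finite-support relations).} For every $\varphi\in E^*$, the map $\varphi\circ h$ is continuous at $0$. So for each $\varepsilon>0$ there are a finite set $S(\varphi,\varepsilon)\subset K$ and $\delta>0$ such that $\abs{f}<\delta$ on $S(\varphi,\varepsilon)$ implies $\abs{\varphi(h(f))}<\varepsilon$. Symmetrically, continuity of $h^{-1}$ at $0$ gives, for each point $p\in K$ and $\varepsilon>0$, finitely many functionals $\varphi_1,\dots,\varphi_n\in E^*$ and $\eta>0$ such that $\max_i\abs{\varphi_i(h(f))}<\eta$ implies $\abs{f(p)}<\varepsilon$. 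Together these define a finite-support relation between $K$ and $E^*$. In particular, $h^{-1}$ maps the set $\{e : \varphi_i(e)=0 \text{ for all } i\}$, an infinite-dimensional closed subspace, into $\{f:\abs{f(p)}<\varepsilon\}$.

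\emph{Step 2 (a Baire-type localisation replacing the Baire property).} Write $C_p(K)=\bigcup_n A_n$ with $A_n=h^{-1}(nB_E)$. Each $A_n$ is closed, because $nB_E$ is weakly closed and $h$ is continuous. The space $C_p(K)$ is not Baire for infinite $K$, so the Baire category theorem cannot be applied directly. Instead I would prove the following directly. Suppose that for every $n$, every finite $F\subset K$, and every basic cylinder $\{f:\abs{f-f_0}<\varepsilon \text{ on } F\}$, the set $A_n$ fails to contain a suitable ``thin'' piece of that cylinder. Then one can choose functions $g_k$ and finite sets $F_k$ with $g_k\notin A_k$ and $g_k$ prescribed only near $F_k$. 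Passing to a $\Delta$-system of the $F_k$ (with root $R$), and using compactness of $K$ to control the accumulation points of $\bigcup_k F_k$, one glues the $g_k$ into a single continuous $g$ on $K$. This $g$ lies in no $A_n$, which is a contradiction. The output of this step should be the following. There exist $n$, a finite set $R\subset K$, and a sequence of functions $u_k$ with pairwise disjoint supports away from $R$, such that every finite combination of the $u_k$ with bounded coefficients lies in $A_n$. Consequently $\{h(\sum_{k\in J}t_ku_k)\}$ is norm-bounded.

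\emph{Step 3 (contradiction via a basic sequence).} Using Step 1 with points $p_k\in\operatorname{supp}u_k$, the images $h(u_k)$ must stay weakly away from $0$ along the directions $\varphi^{(k)}_i$ detecting $p_k$. On the other hand, because the supports are disjoint and escape every finite set, $u_k\to0$ pointwise, so $h(u_k)\to0$ weakly. A bounded weakly null sequence that is not norm null has a subsequence that is seminormalised basic (Bessaga--Pełczyński). I would then test $h^{-1}$ against vectors of the form $h(u_{k_1})+\dots+h(u_{k_m})$, or small perturbations of them lying in the finite-codimensional subspace from Step 1. This should produce elements arbitrarily weakly close to $0$ in $E$ whose preimages keep a fixed value at some point, contradicting continuity of $h^{-1}$ at $0$. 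I would do this by choosing the $k_j$ inductively so that the earlier finitely many functionals are almost annihilated, using the finite-support relation each time.

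The main obstacle is Step 2. The non-Baire nature of $C_p(K)$ for infinite compact $K$ means the localisation must be built by hand. The delicate points are the gluing of the $g_k$ into one continuous function, which is where compactness and the $\Delta$-system lemma enter, and ensuring that the resulting pieces are rich enough for Step 3. The first thing I would try is to take $F_k$ to be the supports from Step 1 attached to a sequence of functionals, and to pick the $g_k$ as Urysohn bumps on pairwise disjoint open sets around the non-root parts of the $F_k$.
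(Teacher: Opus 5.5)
Your Step~1 is sound; its last sentence is the observation behind the paper's fixed-support lemma. The gaps are in Steps~2 and~3.

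\textbf{Step~2.} The ``thin piece'' is never defined. Any gluing of the kind you describe must start each stage from the function built at the previous stage. So what it can honestly deliver is a cube $f_0+\{\sum_{k\in J}t_ku_k:\abs{t_k}\leqslant\delta\}\subseteq A_n$ around a base point $f_0$ produced by the construction, in general with $f_0\neq0$. That statement holds for \emph{every} countable closed cover of $C_p(X)$, for any infinite Tychonoff $X$. To see this, add bumps of amplitude at most $2^{-j}$ whose cozero sets are pairwise disjoint and avoid the finite witness sets; the sum converges uniformly. No $\Delta$-system and no compactness is needed. In fact compactness is what \emph{obstructs} gluing: it is why $C_p(K)=\bigcup_n\{f:\norm{f}_\infty\leqslant n\}$ is meagre. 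So Step~2 extracts nothing about $h$ beyond its continuity. And since $h^{-1}$ is continuous only at $h(0)$, the base point $f_0$ already cuts Step~2 off from Step~3.

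\textbf{Step~3.} This step therefore carries the whole burden, and it has no working mechanism. Continuity of $h^{-1}$ at $0$ gives, for each $k$, functionals $\varphi^{(k)}_i$ and $\eta_k>0$ with $\max_i\abs{\varphi^{(k)}_i(h(u_k))}\geqslant\eta_k$, but nothing uniform in $k$. Since $u_k\to0$ pointwise anyway, nothing you have excludes $\norm{h(u_k)}\to0$, so Bessaga--Pe{\l}czy\'nski does not apply. Moreover, $h$ is not linear. The vectors $h(u_{k_1})+\cdots+h(u_{k_m})$ and their perturbations have preimages about which you know nothing, so you cannot conclude that any preimage keeps a fixed value at some point. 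Note also that your argument never genuinely uses compactness of $K$, although the paper leaves the corresponding question for arbitrary Tychonoff $X$ open.

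\textbf{The missing idea} is to run the Baire argument in $E^*$ rather than in $C_p(K)$. The paper proceeds as follows.
\begin{itemize}
\item It fixes a norm-compact, linearly independent arc $P\subseteq B_{E^*}$. It covers $P$ by the sets $D_{k,m}$ of those $\mu$ with $h(U(F,m))\subseteq\{y:\abs{\mu(y)}\leqslant1\}$ for some $F$ with $1\leqslant\abs F\leqslant k$. These sets are closed because $\Fin_k(K)$ is compact.
\item Your Step~1 observation, combined with linear independence, makes each fibre over a fixed $F$ finite.
\item Hence a norm-convergent sequence $\mu_n\to\mu_\infty$ inside $\operatorname{int}_PD_{k,m}\setminus D_{k-1,m}$, with $k$ minimal, forces infinitely many distinct $F_n$. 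These yield a $\Delta$-system with root $A$, and minimality of $k$ gives some $f_0\in U(A,m)$ with $\abs{\mu_\infty(h(f_0))}>1$.
\item Pointwise-null cut-offs on the petals give $g_n\to f_0$ with $\abs{\mu_n(h(g_n))}\leqslant1$.
\item Put $z_n=h(g_n)-h(f_0)$; this sequence is weakly null and hence bounded. The contradiction is $\abs{\mu_n(z_n)}\leqslant\abs{\mu_\infty(z_n)}+\norm{\mu_n-\mu_\infty}\norm{z_n}\to0$.
\end{itemize}
It is this pairing of norm-convergent functionals with bounded weakly convergent sequences that supplies the uniformity your Step~3 lacks.
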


Taking $E=C(L)$ gives the answer to the problem that motivated the
paper.

\begin{corollaryB}
If $K$ and $L$ are infinite compact Hausdorff spaces, then
\[
                              C_p(K)\not\cong C_w(L).
\]
In particular, $C_p(K)$ and $C_w(K)$ are not homeomorphic.
\end{corollaryB}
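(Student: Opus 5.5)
We deduce Corollary B directly from Theorem A by specialising the Banach space to $E=C(L)$; no further machinery is needed. The only thing to check is that the hypotheses of Theorem A are met, and the one slightly non-formal hypothesis is that $C(L)$ is infinite-dimensional.

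\emph{Step 1: $C(L)$ is infinite-dimensional.} Since $L$ is an infinite Hausdorff space, we can find a sequence $(U_n)_{n\in\N}$ of pairwise disjoint nonempty open subsets of $L$. This is the standard fact: choose inductively a point and an open set whose complement is still infinite, using the Hausdorff property to separate two points of the remaining infinite set. Pick $x_n\in U_n$. Since $L$ is compact Hausdorff, it is Tychonoff, so Urysohn's lemma gives $f_n\in C(L)$ with $f_n(x_n)=1$ and $f_n\equiv 0$ off $U_n$. These functions have pairwise disjoint supports, so they are linearly independent. Hence $E:=C(L)$, with the supremum norm, is an infinite-dimensional real Banach space, and by definition $E_w=C_w(L)$.

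\emph{Step 2: apply Theorem A.} Suppose, towards a contradiction, that $h\colon C_p(K)\to C_w(L)$ is a homeomorphism. Then $h$ is a continuous bijection $C_p(K)\to E_w$. Its inverse $h^{-1}$ is continuous on all of $E_w$, and in particular at the point $h(0)$. Since $K$ is compact Hausdorff, this contradicts Theorem A. Thus $C_p(K)\not\cong C_w(L)$. (Infiniteness of $K$ is not even used here; it is included only to match the formulation of the problems of Krupski and Krupski--Marciszewski.)

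\emph{Step 3: the one-space case.} The final assertion is the special case $L=K$. For infinite compact $K$, Steps 1 and 2 give $C_p(K)\not\cong C_w(K)$, which answers \cite[Problem~2]{Krupski2016}. There is no real obstacle here: all the difficulty lies in Theorem A itself, and the only point needing a line of justification is the infinite-dimensionality of $C(L)$ in Step 1.
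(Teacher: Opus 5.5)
Your proposal is correct and is essentially the paper's proof: the paper also just applies Theorem A with $E=C(L)$, noting that a homeomorphism is in particular a continuous bijection whose inverse is continuous at $h(0)$. Your Step 1 spells out the infinite-dimensionality of $C(L)$, which the paper takes for granted. In your inductive sketch you should recurse inside the open set $L\setminus\overline{U}$ rather than the closed complement $L\setminus U$, or you can bypass the disjoint open sets altogether by using Urysohn functions with $f_i(x_j)=\delta_{ij}$ for any $n$ distinct points.
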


One cannot replace bijectivity in Theorem~A by surjectivity.  Indeed, if
$S$ is a convergent sequence and $X$ is an infinite compact metrisable
space, the construction of Krupski and Marciszewski
\cite[Proposition~5.4]{KrupskiMarciszewski2016} gives a continuous
surjection from $C_p(S)$ onto $C(X)$, even for the norm topology on
$C(X)$.  This does not show that continuity of $h^{-1}$ at $h(0)$ can
be omitted while bijectivity is retained.

The proof combines a norm-compact linearly independent set in $E^*$ with
the finite-support method of Okunev, Krupski, and Marciszewski
\cite{Okunev2011,Krupski2013,Krupski2016,KrupskiMarciszewski2017}.

\section{Preliminaries}
\label{sec:preliminaries}

Let $\N=\{1,2,\ldots\}$.  The following lemma is well known.  More
generally, Banakh and Plichko proved that every infinite-dimensional
complete linear metric space contains a linearly independent copy of
every compact metric space \cite[Corollary~1]{BanakhPlichko2006}.  We
include the proof for completeness; the explicit norm parametrisation
will also enter the proof of Theorem~A.

\begin{lemma}\label{lem:arc}
Let $E$ be an infinite-dimensional real Banach space.  There is a
norm-continuous map $[0,1]\to B_{E^*}$, $t\mapsto\mu_t$,
whose image $P$ is homeomorphic to $[0,1]$, does not contain $0$, and is
linearly independent.  Consequently, if $V\subseteq E^*$ is
linear and $\dim V=r<\infty$, then $\abs{V\cap P}\leqslant r$.  The norm
and weak-star topologies coincide on $P$.
\end{lemma}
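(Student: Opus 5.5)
Since $E$ is infinite-dimensional, so is $E^*$, and I will build the arc from a normalised basic sequence in $E^*$. Every infinite-dimensional Banach space contains a basic sequence (Mazur), so choose a normalised basic sequence $(f_n)$ in $E^*$. Let $C\geqslant 1$ be its basis constant. Then $\abs{a_n}\leqslant 2C\norm{\sum_k a_kf_k}$ for every convergent series $\sum_k a_k f_k$.

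Take a sequence of positive weights, say $c_n=2^{-n}$, and continuous functions $\varphi_n\colon[0,1]\to[0,1]$ with the following property: the vectors $(\varphi_n(t))_n$, $t\in[0,1]$, are linearly independent in $\R^{\N}$. A concrete choice is $\varphi_n(t)=t^{n-1}$ on a shifted interval, e.g.\ $\varphi_n(t)=(1+t)^{-n}$. Put
\[
\mu_t=\kappa\sum_n c_n\varphi_n(t)f_n ,
\]
with $\kappa>0$ chosen so that $\norm{\mu_t}\leqslant 1$; since $\sum_n c_n<\infty$, $\kappa=1$ already suffices. The series converges uniformly in $t$, so $t\mapsto\mu_t$ is norm-continuous.

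Linear independence of $P$ reduces to the scalar sequences. Suppose $\sum_{j=1}^m \lambda_j\mu_{t_j}=0$ with distinct $t_j$. Applying the coordinate functionals of the basic sequence (bounded by $2C$ on the closed span) gives $\sum_j\lambda_j\varphi_n(t_j)=0$ for all $n$. With $x_j=(1+t_j)^{-1}$, which are distinct numbers in $[1/2,1]$, this reads $\sum_j\lambda_j x_j^{\,n}=0$ for all $n$. The first $m$ of these equations form a Vandermonde system in the nonzero distinct $x_j$, so $\lambda=0$. The same computation with $n=1$ shows $\mu_t\neq0$, and distinct $t$ give distinct $\mu_t$ (independence applied to two points).

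So $t\mapsto\mu_t$ is a continuous injection from a compact space into a Hausdorff space, hence a homeomorphism onto $P$. The map stays continuous into the weaker Hausdorff weak-star topology, so by the same compactness argument it is also a homeomorphism onto $(P,w^*)$. Hence the norm and weak-star topologies coincide on $P$.

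For the dimension bound, if $\dim V=r$ and $V\cap P$ contained $r+1$ points, they would be $r+1$ linearly independent vectors in $V$, which is impossible.

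The only genuine point is the injectivity and independence step, which rests on the Vandermonde argument and on boundedness of the coordinate functionals. I expect no real obstacle beyond choosing the $\varphi_n$ so that the Vandermonde structure appears.
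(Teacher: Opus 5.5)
Your proof is correct and follows essentially the same route as the paper: a normalised basic sequence in $E^*$, a geometrically weighted power-type parametrisation, uniqueness of basic-sequence coefficients plus a Vandermonde determinant for linear independence, and the compact-to-Hausdorff argument for both the embedding and the coincidence of the norm and weak-star topologies on $P$. The differences are cosmetic. You use $(1+t)^{-n}$ with $n\geqslant 1$, which is why you need the $x_j$ to be nonzero in the Vandermonde step, whereas the paper uses $t^j$ with the convention $t^0=1$. You also get continuity from uniform convergence rather than from the paper's explicit Lipschitz bound.
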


\begin{proof}
The dual $E^*$ is infinite-dimensional.  By Mazur's basic-sequence
selection theorem, it contains a normalised basic sequence
$(\nu_j)_{j\geqslant0}$; see
\cite[Section~1.5]{AlbiacKalton2016}.  We use the convention $t^0=1$
for every $t\in[0,1]$, including $t=0$.  For $t\in[0,1]$, let
$\mu_t=\sum_{j=0}^{\infty}2^{-j-1}t^j\nu_j$.
The series converges absolutely in norm and $\norm{\mu_t}\leqslant1$.
Moreover, the terms corresponding to $j=0$ cancel in
$\mu_t-\mu_s$, and the mean value theorem gives
$\abs{t^j-s^j}\leqslant j\abs{t-s}$ for $j\geqslant1$ and
$s,t\in[0,1]$.  Therefore
$\norm{\mu_t-\mu_s}\leqslant
\sum_{j=1}^{\infty}2^{-j-1}\abs{t^j-s^j}\leqslant
\abs{t-s}\sum_{j=1}^{\infty}j2^{-j-1}=\abs{t-s}$.  Thus the map is
norm-continuous.

Let us suppose that $t_1,\ldots,t_q$ are distinct and
$a_1\mu_{t_1}+\cdots+a_q\mu_{t_q}=0$.
Uniqueness of the coefficients with respect to the basic sequence gives
$a_1t_1^j+\cdots+a_qt_q^j=0$ for every $j\geqslant0$.  For
$j=0,\ldots,q-1$, the coefficient matrix is the transpose of a
Vandermonde matrix.  Since the $t_i$ are distinct, it is nonsingular by
the standard determinant formula
\cite[Section~VM, Theorem~DVM]{Beezer2004}, so
$a_1=\cdots=a_q=0$.  Hence $P=\{\mu_t:0\leqslant t\leqslant1\}$ is
linearly independent.  In particular the map is injective, and
the non-zero coefficient of $\nu_0$ shows that $0\notin P$.  A continuous
injection from the compact interval into the normed space $E^*$ is a
homeomorphic embedding, so $P$ is a norm-compact arc and is perfect.

Any subset of an $r$-dimensional space having more than $r$ elements is
linearly dependent, which proves the stated estimate.  Finally,
the identity from $P$ with its norm topology to $P$ with its weak-star
topology is a continuous bijection from a compact space to a Hausdorff
space.  It is therefore a homeomorphism.
\end{proof}

The next lemma is the countable $k$-uniform instance of the
$\Delta$-system theorem of Erd\H{o}s and Rado
\cite[p.~86]{ErdosRado1960}.  We include its short proof in the form
used below.

\begin{lemma}\label{lem:delta-system}
Let $k\in\N$ and let $(F_n)$ be a sequence of pairwise distinct
$k$-element sets.  There are strictly increasing indices
$n_1<n_2<\cdots$, a finite set $A$, and pairwise disjoint non-empty
finite sets $A_j$, disjoint from $A$, such that
$F_{n_j}=A\cup A_j$ for every $j\in\N$.  In particular, $\abs A<k$.
The set $A$ is called the root of this $\Delta$-system, and the sets
$A_j=F_{n_j}\setminus A$ are called its petals.
\end{lemma}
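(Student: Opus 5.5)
We prove the countable $k$-uniform $\Delta$-system lemma by induction on $k$, using the infinite pigeonhole principle and a greedy extraction of disjoint petals. Fix $k$ and a sequence $(F_n)$ of pairwise distinct $k$-element sets. The argument splits according to whether some single point lies in infinitely many of the $F_n$, in a sense made precise below.

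\emph{Case 1: every point lies in only finitely many $F_n$.} Then we take $A=\emptyset$ and extract disjoint sets greedily. Put $n_1=1$. Given $n_1<\cdots<n_j$, the union $U_j=F_{n_1}\cup\cdots\cup F_{n_j}$ is finite. Each point of $U_j$ belongs to only finitely many $F_n$, so only finitely many indices $n$ have $F_n\cap U_j\neq\emptyset$. We can therefore choose $n_{j+1}>n_j$ with $F_{n_{j+1}}\cap U_j=\emptyset$. The resulting sets $F_{n_j}$ are pairwise disjoint and non-empty, since $k\geqslant1$. This case also covers $k=1$: distinct singletons are automatically pairwise disjoint, so the base of the induction is immediate.

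\emph{Case 2: some point $x$ lies in $F_n$ for infinitely many $n$.} Pass to the infinite subsequence of those $n$ with $x\in F_n$. If $k=1$, this would force the same singleton $\{x\}$ to appear twice, contradicting distinctness, so Case 2 only occurs when $k\geqslant2$. Along the subsequence, the sets $F_n\setminus\{x\}$ are pairwise distinct and have exactly $k-1$ elements. By the induction hypothesis there are a further subsequence, a root $A'$, and pairwise disjoint non-empty petals $A_j$ disjoint from $A'$ with $F_{n_j}\setminus\{x\}=A'\cup A_j$. Set $A=A'\cup\{x\}$. Since $x\notin F_{n_j}\setminus\{x\}$, the point $x$ lies in no petal, so each $A_j$ is disjoint from $A$. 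Also $F_{n_j}=A\cup A_j$, and composing the two subsequences keeps the indices strictly increasing.

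Finally, in either case each petal $A_j$ is non-empty and disjoint from $A$, and $A\cup A_j$ has $k$ elements, so $\abs A<k$. The only delicate point is the case split itself. It ensures that we either find a common point to move into the root, or that finiteness of $U_j$ together with local finiteness makes the greedy choice possible. The distinctness hypothesis is what guarantees the petals are non-empty and rules out Case 2 at $k=1$.
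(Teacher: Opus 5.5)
Your proof is correct and follows the paper's argument: induction on $k$, splitting on whether some point lies in infinitely many $F_n$. In the first case you remove that point, apply the induction hypothesis and adjoin the point to the root; otherwise you greedily extract a pairwise disjoint subsequence with empty root. Your extra checks, that distinctness rules out the first case when $k=1$ and that the adjoined point lies in no petal, are details the paper leaves implicit.
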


\begin{proof}
We argue by induction on $k$.  The case $k=1$ is immediate, with
$A=\varnothing$.  Let us suppose that $k>1$.  If some point $x$ belongs to
infinitely many $F_n$, let us retain those sets and apply the induction
hypothesis to the pairwise distinct sets $F_n\setminus\{x\}$; then
let us adjoin $x$ to the root.  If no point belongs to infinitely many $F_n$,
let us choose a pairwise disjoint subsequence recursively.  Indeed, a finite
union of previously chosen sets meets only finitely many $F_n$.  In the
latter case, let us take $A=\varnothing$.
\end{proof}

\section{Proof of the main theorem}
\label{sec:main-proof}

Let us suppose, towards a contradiction, that a map as in Theorem~A exists.
Translation in $E_w$ allows us to assume that
$h\colon C_p(K)\to E_w$ satisfies $h(0)=0$,
while preserving continuity of $h$ and continuity of $h^{-1}$ at the
origin.  If $K=\varnothing$, surjectivity is already impossible, so let
us assume that $K$ is non-empty.  Let us fix the arc
$P\subseteq B_{E^*}$ supplied
by Lemma~\ref{lem:arc}.
In what follows, $P$ carries its norm topology, which coincides with
its weak-star topology by Lemma~\ref{lem:arc}.

For $k\in\N$, let
$\Fin_k(K)=\{F\subseteq K:1\leqslant\abs F\leqslant k\}$ carry the
Vietoris topology, and let $\Fin_0(K)=\varnothing$.  The map from $K^k$
onto $\Fin_k(K)$ which sends $(x_1,\ldots,x_k)$ to
$\{x_1,\ldots,x_k\}$ is continuous: the inverse image of a basic
Vietoris set $\langle U_1,\ldots,U_s\rangle$ consists of those tuples
whose coordinates lie in $\bigcup_{i=1}^s U_i$ and meet every $U_i$,
and is therefore open.  Hence $\Fin_k(K)$ is compact.  Since $K$ is
Hausdorff, its Vietoris hyperspace, and therefore $\Fin_k(K)$, is
Hausdorff.  For a
finite $F\subseteq K$ and $m\in\N$, let
$U(F,m)=\{f\in C(K):\abs{f(x)}<1/m\text{ for }x\in F\}$; in particular,
$U(\varnothing,m)=C_p(K)$.  For $\mu\in P$, let
$H(\mu)=\{y\in E:\abs{\mu(y)}\leqslant1\}$.

The following relation is adapted from the relation in the proof of
\cite[Theorem~4.1]{KrupskiMarciszewski2017}.  Let
$\pi_2\colon\Fin_k(K)\times P\to P$ denote the projection onto the
second coordinate, so that $\pi_2(F,\mu)=\mu$.  Let
\[
 \begin{split}
 Z_{k,m}&=\{(F,\mu)\in\Fin_k(K)\times P:
                    h(U(F,m))\subseteq H(\mu)\},\\
 D_{k,m}&=\pi_2(Z_{k,m}),\qquad D_{0,m}=\varnothing.
 \end{split}
\]
The fixed-support estimate below is the analogue of
\cite[Claim~1, p.~651]{KrupskiMarciszewski2017}.  The linearly independent
set $P$ makes the present fibre finite.

\begin{proposition}\label{prop:closed-cover}
For all $k,m\in\N$, the set $Z_{k,m}$ is compact and $D_{k,m}$ is a
compact, hence closed, subset of $P$.  Moreover,
$D_{k-1,m}\subseteq D_{k,m}$ and
$P=\bigcup_{k,m\in\N}D_{k,m}$.
\end{proposition}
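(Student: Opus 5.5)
The proof has three parts: show that $Z_{k,m}$ is closed in the compact Hausdorff space $\Fin_k(K)\times P$, read off monotonicity directly from the definitions, and use continuity of $h^{-1}$ at $0$ to get the cover.

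\emph{Compactness.} Since $\Fin_k(K)\times P$ is compact Hausdorff, it suffices to show that its complement of $Z_{k,m}$ is open. Then $Z_{k,m}$ is compact, and so is its continuous image $D_{k,m}=\pi_2(Z_{k,m})$ in the Hausdorff space $P$.

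Let $(F_0,\mu_0)\notin Z_{k,m}$. Then there is $f\in U(F_0,m)$ with $\abs{\mu_0(h(f))}>1$. Write $y=h(f)$. The map $\mu\mapsto\mu(y)$ is norm-continuous on $P$, so $\abs{\mu(y)}>1$ for all $\mu$ in a neighbourhood $W$ of $\mu_0$ in $P$. The condition $\abs{f(x)}<1/m$ for $x\in F_0$ means $F_0\subseteq O$, where $O=\{x:\abs{f(x)}<1/m\}$ is open. The Vietoris set $\{F:F\subseteq O\}$ is open and contains $F_0$. For every $F$ in it we have $f\in U(F,m)$, so $h(U(F,m))\not\subseteq H(\mu)$ for $\mu\in W$. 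Thus this Vietoris set times $W$ is a neighbourhood of $(F_0,\mu_0)$ that misses $Z_{k,m}$.

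\emph{Monotonicity.} $D_{0,m}=\varnothing$, so only $k\geq 2$ needs an argument. Since $\Fin_{k-1}(K)\subseteq\Fin_k(K)$, the definitions give $Z_{k-1,m}\subseteq Z_{k,m}$ directly, hence $D_{k-1,m}\subseteq D_{k,m}$.

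\emph{Cover.} This step uses continuity of $h^{-1}$ at $h(0)=0$ (after the translation). Fix $\mu\in P$. The set $H(\mu)$ is a weak neighbourhood of $0$ in $E$: it contains the weakly open set $\{y:\abs{\mu(y)}<1\}\ni 0$. Because $h$ is a bijection, $h(U(F,m))\subseteq H(\mu)$ is equivalent to $U(F,m)\subseteq h^{-1}(H(\mu))$.

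Here I would expect to invoke continuity of $h$ at $0$: $h^{-1}(H(\mu))$ is a neighbourhood of $0$ in $C_p(K)$, hence contains a basic set $U(F,m)$ with $F$ finite. If $F=\varnothing$, one replaces it by a singleton $\{x\}$ with $x\in K$, which is possible since $K\neq\varnothing$; this only shrinks $U$. Then $(F,\mu)\in Z_{\abs F,m}$, so $\mu\in D_{\abs F,m}$.

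So continuity of $h$ (not of $h^{-1}$) is what gives the cover. The point to check carefully is that the neighbourhood basis of $0$ in $C_p(K)$ is exactly $\{U(F,m)\}$, which holds. The continuity of $h^{-1}$ will presumably enter later, in the finiteness of fibres via Lemma~\ref{lem:arc}.

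The main, mild, obstacle is the Vietoris openness in the compactness step. The key observation there is that the condition $F\subseteq O$ for an open $O$ is Vietoris-open, namely the basic set $\langle O\rangle$.
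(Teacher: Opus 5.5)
Your proof is correct and follows the paper's argument. $Z_{k,m}$ is closed via a product neighbourhood; your open set $\{x:\abs{f(x)}<1/m\}$ plays the role of the paper's $\bigcup_{x\in F}V_x$. Monotonicity comes from $\Fin_{k-1}(K)\subseteq\Fin_k(K)$. The cover comes from continuity of $h$ at $0$, not of $h^{-1}$ as the first sentence of that step wrongly says before you correct yourself. The only real divergence is the case $F=\varnothing$: the paper rules it out using surjectivity and $\mu\neq0$, whereas you shrink to a singleton. That is equally valid and avoids surjectivity.
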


\begin{proof}
If $(F,\mu)\notin Z_{k,m}$, let us choose $f\in U(F,m)$ such that
$\abs{\mu(h(f))}>1$.
For every $x\in F$, continuity of $f$ gives an open neighbourhood
$V_x$ of $x$ on which $\abs f<1/m$.  Let
$V=\bigcup_{x\in F}V_x$.  Hence $f\in U(F',m)$ whenever
$F'\in\Fin_k(K)$ and $F'\subseteq V$.  The latter condition defines an
open Vietoris neighbourhood of $F$.  At the same time,
$\nu\mapsto\nu(h(f))$ is norm-continuous on $P$, so the strict inequality
persists on a relatively norm-open neighbourhood of $\mu$ in $P$.
These two neighbourhoods form a product
neighbourhood disjoint from $Z_{k,m}$.  Thus $Z_{k,m}$ is closed in the
compact Hausdorff space $\Fin_k(K)\times P$, and is compact.  Its second
projection $D_{k,m}$ is compact and hence closed in $P$.

The asserted inclusion follows from
$\Fin_{k-1}(K)\subseteq\Fin_k(K)$.  For the equality, let us fix $\mu\in P$.
Since $h(0)=0$, we have $\mu(h(0))=0$.  As $h$ is continuous into
$E_w$, the map
$f\mapsto\mu(h(f))$ is continuous at $0$ in $C_p(K)$.  There are a
finite set $F\subseteq K$
and $\varepsilon>0$ such that $\abs{f(x)}<\varepsilon$ for every
$x\in F$ implies $\abs{\mu(h(f))}<1$.
The set $F$ cannot be empty: otherwise surjectivity of $h$ would imply
$\abs{\mu(y)}<1$ for every $y\in E$, contrary to $\mu\ne0$.  Let us choose
$m\in\N$ with $1/m<\varepsilon$ and let $k=\abs F$.  Then
$(F,\mu)\in Z_{k,m}$, so $\mu\in D_{k,m}$.
\end{proof}

\begin{lemma}\label{lem:fixed-support}
For every $k,m\in\N$ and every $F\in\Fin_k(K)$ there are
$r\in\N$ and $\varphi_1,\ldots,\varphi_r\in E^*$, depending only on $(F,m)$,
such that
\[
 \{\mu\in P:(F,\mu)\in Z_{k,m}\}
 \subseteq P\cap\operatorname{span}\{\varphi_1,\ldots,\varphi_r\},
 \qquad
 \abs{\{\mu\in P:(F,\mu)\in Z_{k,m}\}}\leqslant r.
\]
\end{lemma}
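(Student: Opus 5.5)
We fix $F$ and $m$ and prove the statement by showing that every $\mu$ with $h(U(F,m))\subseteq H(\mu)$ vanishes on a subspace of finite codimension in $E$ that depends only on $F$ and $m$. Such a $\mu$ then lies in the finite-dimensional annihilator of that subspace, and Lemma~\ref{lem:arc} bounds how many elements of $P$ can lie there.

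First, let $L_F=\{f\in C(K):f|_F=0\}$. This is a linear subspace with $L_F\subseteq U(F,m)$, and, since $U(F,m)$ is stable under adding elements of $L_F$, we have $g+L_F\subseteq U(F,m)$ for every $g\in U(F,m)$. The key point is that $h$ is only a bijection, not linear, so $h(L_F)$ need not be a subspace. Instead we use continuity of $h^{-1}$ at $0$. Since $h^{-1}\colon E_w\to C_p(K)$ is continuous at $0=h(0)$, there are $\varphi_1,\dots,\varphi_r\in E^*$ and $\delta>0$ such that
\[
 W=\{y\in E:\abs{\varphi_i(y)}<\delta\ \text{for } i=1,\ldots,r\}
\]
satisfies $h^{-1}(W)\subseteq U(F,m)$. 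Here $r\geqslant1$ may be assumed by adding a zero functional. Equivalently, $W\subseteq h(U(F,m))$, because $h$ is surjective. The functionals $\varphi_i$ depend only on $(F,m)$; this is where the hypothesis on $h^{-1}$ enters.

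Next, let $(F,\mu)\in Z_{k,m}$. Then $W\subseteq h(U(F,m))\subseteq H(\mu)$, so $\abs{\mu(y)}\leqslant1$ for all $y\in W$. Now $W$ contains the subspace $N=\bigcap_i\ker\varphi_i$, and a linear functional bounded on a subspace vanishes on it. Hence $\mu|_N=0$. The standard lemma on functionals vanishing on $\bigcap_i\ker\varphi_i$ then gives $\mu\in\operatorname{span}\{\varphi_1,\dots,\varphi_r\}$. This proves the inclusion. The cardinality bound follows from Lemma~\ref{lem:arc}, because $V=\operatorname{span}\{\varphi_1,\ldots,\varphi_r\}$ has $\dim V\leqslant r$, so $\abs{V\cap P}\leqslant r$.

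The step that needs care is the first one: getting $\varphi_i$ independent of $\mu$. Continuity of $h$ alone, as used in Proposition~\ref{prop:closed-cover}, runs the wrong direction. It is continuity of $h^{-1}$ at $0$, together with surjectivity, that places a fixed weak neighbourhood of $0$ inside $h(U(F,m))$. The index $k$ plays no role beyond $F\in\Fin_k(K)$.
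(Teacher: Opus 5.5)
Your proof is correct and follows the paper's argument. Continuity of $h^{-1}$ at $0$, together with surjectivity, places a fixed weak neighbourhood $W$ inside $h(U(F,m))\subseteq H(\mu)$, so $\mu$ vanishes on $\bigcap_i\ker\varphi_i$ and hence lies in $\operatorname{span}\{\varphi_1,\ldots,\varphi_r\}$; the paper reaches the same conclusion by contradiction, scaling a vector to get $\mu(y)=2$. Lemma~\ref{lem:arc} then gives the bound, and your opening paragraph about $L_F$ is never used and can be dropped.
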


\begin{proof}
Continuity of $h^{-1}$ at $0$ gives a weak neighbourhood of $0$ in $E$
of the form $W=\{y\in E:\abs{\varphi_i(y)}<\eta,
1\leqslant i\leqslant r\}$ such that
$h^{-1}(W)\subseteq U(F,m)$.  Since $h$ is bijective,
$W\subseteq h(U(F,m))$.

Let us fix $\mu\in P$ with $(F,\mu)\in Z_{k,m}$.  If
$\mu\notin\operatorname{span}\{\varphi_1,\ldots,\varphi_r\}$, there is
$y\in E$ such that
$\varphi_i(y)=0$ for $1\leqslant i\leqslant r$ and $\mu(y)=2$.
Indeed, for $T\colon E\to\R^r$, given by
$T(y)=(\varphi_1(y),\ldots,\varphi_r(y))$, if
$\ker T\subseteq\ker\mu$, the formula $\lambda(Ty)=\mu(y)$ defines a
linear functional on $T(E)$.  Since $T(E)$ is finite-dimensional,
$\lambda$ extends to a linear functional on $\R^r$.  Consequently,
$\mu=\lambda\circ T\in\operatorname{span}\{\varphi_1,\ldots,\varphi_r\}$.
Thus our assumption gives $y_0\in\ker T$ with $\mu(y_0)\ne0$; replacing
$y_0$ by $2y_0/\mu(y_0)$ gives the required vector $y$.

$y\in W\subseteq h(U(F,m))$, whereas
$h(U(F,m))\subseteq H(\mu)$.  This contradicts $\mu(y)=2$ and proves
the asserted inclusion.  The cardinal estimate follows from
Lemma~\ref{lem:arc}.
\end{proof}

We next establish the pointwise-null cut-off lemma used in the final
argument.

\begin{lemma}\label{lem:cutoff}
Let $K$ be a compact Hausdorff space and let $(A_n)$ be a sequence of
pairwise disjoint, non-empty finite subsets of $K$ such that
$\abs{A_n}\leqslant q$ for some $q\in\N$.  There are strictly increasing
indices $n_1<n_2<\cdots$ and functions $u_j\in C(K,[0,1])$ such that
$u_j=1$ on $A_{n_j}$ and $u_j(x)\to0$ for every $x\in K$.
\end{lemma}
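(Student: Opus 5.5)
The plan is to reduce to the case $q=1$ by treating the points of the sets $A_n$ one coordinate at a time, and then to handle single points by choosing pairwise disjoint open neighbourhoods.

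\emph{Reduction to single points.} For each $n$, list the points of $A_n$ as $a_n^1,\ldots,a_n^q$, repeating points if $\abs{A_n}<q$. Since the sets $A_n$ are pairwise disjoint, for each fixed $i$ the points $a_n^i$ ($n\in\N$) are pairwise distinct. Suppose the following single-point statement holds: for every sequence of distinct points $(b_n)$ in $K$ there are indices $n_1<n_2<\cdots$ and $v_j\in C(K,[0,1])$ with $v_j(b_{n_j})=1$ and $v_j\to0$ pointwise. Apply it to $(a_n^1)$ and obtain a subsequence. Then apply it to the coordinate-$2$ points along that subsequence, and continue. After $q$ steps we have a single index sequence $(n_j)$ and, for each $i$, functions $v^i_j$ with $v^i_j(a^i_{n_j})=1$. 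We may carry these along because a subsequence of a pointwise null sequence is still pointwise null. Put $u_j=\max_{1\leqslant i\leqslant q}v^i_j$. Then $u_j$ is continuous, takes values in $[0,1]$, equals $1$ on $A_{n_j}$, and $u_j(x)\to0$ for every $x$, since it is a maximum of finitely many null sequences.

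\emph{Single points.} Let $D=\{b_n:n\in\N\}$, an infinite set. We recursively choose indices $n_1<n_2<\cdots$, open sets $W_j\ni b_{n_j}$, and open sets $V_0=K\supseteq V_1\supseteq V_2\supseteq\cdots$ with the following properties:
\begin{itemize}
\item each $V_j$ contains $b_n$ for infinitely many $n$;
\item $W_{j+1}\subseteq V_j$;
\item $W_{j+1}\cap V_{j+1}=\varnothing$.
\end{itemize}
Given $V_j$, compactness of $K$ gives a point $x\in K$ every neighbourhood of which contains infinitely many of the points $b_n\in V_j$. Choose $n_{j+1}>n_j$ with $b_{n_{j+1}}\in V_j$ and $b_{n_{j+1}}\neq x$. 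By the Hausdorff property, take disjoint open sets $W'\ni b_{n_{j+1}}$ and $V'\ni x$. Set $W_{j+1}=W'\cap V_j$ and $V_{j+1}=V'\cap V_j$. The set $V_{j+1}$ is a neighbourhood of $x$, so it still contains infinitely many $b_n$.

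The resulting sets $W_j$ are pairwise disjoint. Indeed, if $i<j$ then $W_j\subseteq V_{j-1}\subseteq V_i$, while $W_i\cap V_i=\varnothing$. By Urysohn's lemma (compact Hausdorff spaces are normal) choose $v_j\in C(K,[0,1])$ with $v_j(b_{n_j})=1$ and $v_j=0$ outside $W_j$. Each $x\in K$ lies in at most one $W_j$, so $v_j(x)=0$ for all but at most one $j$, and hence $v_j(x)\to0$.

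The main point requiring care is this recursive selection of pairwise disjoint neighbourhoods. It is where compactness is used, through the existence of an accumulation point $x$ of the remaining points. This is also why the argument is run one point at a time rather than for whole sets $A_n$ at once: a set $A_n$ need not lie entirely inside $V_j$.
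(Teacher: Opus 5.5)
Your proof is correct and takes essentially the paper's route. You reduce to single points by repeatedly passing to subsequences and taking a pointwise maximum; padding each $A_n$ with repeated points just sidesteps the paper's case distinction for empty $B_n$. You then treat single points by recursively building pairwise disjoint open sets from an $\omega$-accumulation point and applying Urysohn's lemma. The only variation is that you re-choose the accumulation point at each stage and separate using the Hausdorff property, where the paper fixes one point $x$ and uses regularity.

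One small slip: the accumulation point $x$ need only lie in $\overline{V_j}$, so $V_{j+1}=V'\cap V_j$ need not be a neighbourhood of $x$. The conclusion still holds, because by the choice of $x$ the neighbourhood $V'$ contains infinitely many of the points $b_n\in V_j$, and all of these lie in $V_{j+1}$.
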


\begin{proof}
We argue by induction on $q$.  Let us first suppose that each $A_n$ is a
singleton, say $A_n=\{x_n\}$.  The points $x_n$ are distinct.  Compactness
gives an $\omega$-accumulation point $x$ of the set $\{x_n:n\in\N\}$:
every neighbourhood of $x$ contains infinitely many $x_n$.
Indeed, otherwise every point of $K$ would have an open neighbourhood
meeting $\{x_n:n\in\N\}$ in only finitely many points.  A finite
subcover would then imply that this set is finite.

Starting with $W_1=K$, let us recursively choose strictly increasing $n_j$,
open neighbourhoods
$W_j$ of $x$, and pairwise disjoint open sets $V_j$ such that
$x_{n_j}\in V_j\subseteq W_j$ and
$W_{j+1}\subseteq W_j\setminus\overline{V_j}$.
At stage $j$, let us choose $n_j>n_{j-1}$ with
$x_{n_j}\in W_j\setminus\{x\}$.  Regularity gives an open $V_j$ with
$x_{n_j}\in V_j$ and
$\overline{V_j}\subseteq W_j\setminus\{x\}$.  Since
$x\notin\overline{V_j}$, an open neighbourhood $W_{j+1}$ with the
required inclusion exists.  Future sets $V_i$ lie in
$W_{j+1}$, so the $V_j$ are pairwise disjoint.

Let us choose an open set $O_j$ with $x_{n_j}\in O_j$ and
$\overline{O_j}\subseteq V_j$.  Urysohn's lemma gives
$v_j\in C(K,[0,1])$ such that $v_j(x_{n_j})=1$ and $v_j=0$ on
$K\setminus O_j$.  Thus
$\operatorname{supp}v_j\subseteq\overline{O_j}\subseteq V_j$.
The supports are pairwise disjoint, and hence $v_j\to0$ pointwise.  This
proves the case $q=1$.

Let us assume the result for $q-1$.
For each $n$, let us choose $x_n\in A_n$.  After applying the singleton
case and passing to a subsequence, let us choose pointwise-null
$v_n\in C(K,[0,1])$ with
$v_n(x_n)=1$.  Let us put $B_n=A_n\setminus\{x_n\}$.  The sets $B_n$ are
pairwise disjoint and have at most $q-1$ points.  If infinitely many of
them are empty, let us pass to those indices and put $w_n=0$.  Otherwise,
let us discard the finitely many empty sets, apply the induction
hypothesis, and pass to a further subsequence to obtain pointwise-null
$w_n\in C(K,[0,1])$ with $w_n=1$ on $B_n$.  Then
$u_n=\max\{v_n,w_n\}$ is continuous and equals $1$ on $A_n$.  Since
$0\leqslant u_n\leqslant v_n+w_n$, the sequence $(u_n)$ is pointwise
null.
Relabelling the retained indices proves the assertion.
\end{proof}

The cardinality bound cannot be omitted.  Indeed, without it the
conclusion of the lemma would give, for every pairwise disjoint sequence
of non-empty finite subsets of $K$, a subsequence $(A_{n_j})$ and
functions $(u_j)$ as above.  The sets
$\{x\in K:u_j(x)>1/2\}$ would then form a point-finite open expansion of
that subsequence.  This is precisely property $(\kappa)$.  For compact
$K$, \cite[Theorem~5.9]{KrupskiMarciszewski2017} and
\cite[Theorem~1.2]{KrupskiKucharskiMarciszewski2025} show that $K$ has
property $(\kappa)$ if and only if it is scattered.  Thus the conclusion
without the cardinality bound fails for every non-scattered compact
space.

Compactness of $K$ enters the argument in two places.  First, it makes
each $\Fin_k(K)$ compact, and hence makes $Z_{k,m}$ compact and
$D_{k,m}$ closed in $P$; this closedness is required in the Baire
argument below.  Second, compactness is used in
Lemma~\ref{lem:cutoff}, both to obtain an $\omega$-accumulation point and
to use the regularity and normality of compact Hausdorff spaces.  These
two mechanisms are not available in general for an arbitrary Tychonoff space.
Consequently, the present argument does not settle the broader question
raised by K\k{a}kol, Leiderman, and Michalak
\cite{KakolLeidermanMichalak2022}: whether $C_p(X)\cong E_w$ can occur
for some infinite Tychonoff space $X$ and an infinite-dimensional
Banach space $E$.

\begin{proof}[Proof of Theorem~A]
Let us retain the notation and the contradiction hypothesis fixed above.  By
Proposition~\ref{prop:closed-cover}, the compact metric arc $P$ is the countable
union of the closed sets $D_{k,m}$.  The Baire category theorem gives
$k,m\in\N$ for which $D_{k,m}$ has non-empty interior in $P$.  Let us fix
such an $m$ and let $k$ be the least positive integer for which this
happens.  Then $\operatorname{int}_P D_{k-1,m}=\varnothing$.  Since
$D_{k-1,m}$ is closed in $P$, the set
$G=\operatorname{int}_P D_{k,m}\setminus D_{k-1,m}$ is a non-empty
relatively open subset of $P$.

The set $P$ is a norm-compact arc, so $G$ has no isolated points.  Let us choose
$\mu_\infty\in G$ and pairwise distinct functionals
$\mu_n\in G\setminus\{\mu_\infty\}$ such that
$\norm{\mu_n-\mu_\infty}\to0$.
For each $n$, let us choose $F_n\in\Fin_k(K)$ with
$(F_n,\mu_n)\in Z_{k,m}$.
Since $\mu_n\notin D_{k-1,m}$, each $F_n$ has exactly $k$ points.
For each fixed $F\in\Fin_k(K)$, Lemma~\ref{lem:fixed-support} makes the
fibre $\{\mu\in P:(F,\mu)\in Z_{k,m}\}$ finite.  Since the $\mu_n$ are
distinct, each fixed $F$ occurs only finitely often among the $F_n$.
Thus $\{F_n:n\in\N\}$ is infinite; if $K$ is finite this is already a
contradiction.  Let us pass to a subsequence and assume that the $F_n$
are pairwise distinct.

Let us apply Lemma~\ref{lem:delta-system} and relabel the resulting subsequence
so that $F_n=A\cup A_n$, where $A$ is the common root and the non-empty
sets $A_n=F_n\setminus A$ are the petals.  These sets are pairwise disjoint
and disjoint from $A$, and $\abs A<k$.

The minimal choice of $k$ gives
$h(U(A,m))\nsubseteq H(\mu_\infty)$.
Indeed, if $A\ne\varnothing$ and the reverse inclusion held, then
$\mu_\infty\in D_{\abs A,m}\subseteq D_{k-1,m}$, contrary to
$\mu_\infty\in G$.  If $A=\varnothing$, the reverse inclusion would say
$E=h(C_p(K))\subseteq H(\mu_\infty)$, which is impossible because
$\mu_\infty\ne0$.  Let us choose $f_0\in U(A,m)$ such that
$\abs{\mu_\infty(h(f_0))}>1$.
Since $\norm{\mu_n-\mu_\infty}\to0$,
$\mu_n(h(f_0))\to\mu_\infty(h(f_0))$.  There is therefore
$\varepsilon>0$ such that, after discarding finitely many terms,
\begin{equation}\label{eq:margin}
                         \abs{\mu_n(h(f_0))}>1+\varepsilon
                         \qquad(n\in\N).
\end{equation}

Let us apply Lemma~\ref{lem:cutoff} to the petals $A_n$, whose cardinalities
are at most $k$.  After another passage to a subsequence, let us choose
$u_n\in C(K,[0,1])$ such that $u_n=1$ on $A_n$ and $u_n\to0$
pointwise on $K$.  Let us put $g_n=(1-u_n)f_0$.
Then $g_n\to f_0$ in $C_p(K)$.  On $A_n$ we have $g_n=0$, whereas for
$x\in A$ we have
$\abs{g_n(x)}\leqslant\abs{f_0(x)}<1/m$.  Thus $g_n\in U(F_n,m)$.
Since $(F_n,\mu_n)\in Z_{k,m}$, we obtain
\begin{equation}\label{eq:upper-bound}
                              \abs{\mu_n(h(g_n))}\leqslant1
                              \qquad(n\in\N).
\end{equation}

Continuity of $h$ and $g_n\to f_0$ imply that $h(g_n)\to h(f_0)$ weakly
in $E$.
Let us put $z_n=h(g_n)-h(f_0)$.  Every weakly convergent sequence in a Banach
space is norm bounded by the uniform boundedness principle, so there is
$M<\infty$ with $\norm{z_n}\leqslant M$ for every $n$.  Now
\begin{align}\label{eq:final-estimate}
 \abs{\mu_n(h(g_n))-\mu_n(h(f_0))}
 &=\abs{\mu_n(z_n)} \notag\\
 &\leqslant \abs{\mu_\infty(z_n)}
       +\norm{\mu_n-\mu_\infty}\,\norm{z_n}
   \longrightarrow0.
\end{align}
The first term tends to $0$ by weak convergence of $z_n$, and the second
by the norm convergence of $(\mu_n)$ and norm boundedness.  Equations
\eqref{eq:margin} and \eqref{eq:final-estimate} give
$\abs{\mu_n(h(g_n))}>1$ for all sufficiently large $n$, contradicting
\eqref{eq:upper-bound}.
\end{proof}

\begin{proof}[Proof of Corollary~B]
If $K$ and $L$ are infinite compact Hausdorff spaces, then $C(L)$ is an
infinite-dimensional Banach space.  Applying Theorem~A with $E=C(L)$
gives the conclusion.
\end{proof}


\begin{thebibliography}{99}

\bibitem{AlbiacKalton2016}
F.~Albiac and N.~J. Kalton,
\emph{Topics in Banach Space Theory}, second ed.,
Graduate Texts in Mathematics, vol.~233, Springer, Cham, 2016.

\bibitem{Arkhangelskii1992}
A.~V. Arkhangel'ski\u{\i},
\emph{Topological Function Spaces},
Mathematics and its Applications, vol.~78, Kluwer Academic Publishers,
Dordrecht, 1992.

\bibitem{BanakhPlichko2006}
T.~Banakh and A.~Plichko,
\emph{The algebraic dimension of linear metric spaces and Baire
properties of their hyperspaces},
Rev. R. Acad. Cienc. Exactas F\'{\i}s. Nat. Ser. A Mat. RACSAM
\textbf{100} (2006), 31--37.

\bibitem{Beezer2004}
R.~A. Beezer,
\emph{A First Course in Linear Algebra}, version~2.21, 2004,
Section~VM, Theorem~DVM.

\bibitem{ErdosRado1960}
P.~Erd\H{o}s and R.~Rado,
\emph{Intersection theorems for systems of sets},
J. London Math. Soc. \textbf{35} (1960), no.~1, 85--90.

\bibitem{GorakKrupskiMarciszewski2019}
R.~G\'{o}rak, M.~Krupski, and W.~Marciszewski,
\emph{On uniformly continuous maps between function spaces},
Fund. Math. \textbf{246} (2019), 257--274.

\bibitem{KakolLeidermanMichalak2022}
J.~K\k{a}kol, A.~Leiderman, and A.~Michalak,
\emph{A note on Banach spaces $E$ for which $E_w$ is homeomorphic to
$C_p(X)$},
Rev. R. Acad. Cienc. Exactas F\'{\i}s. Nat. Ser. A Mat. RACSAM
\textbf{116} (2022), article~150.

\bibitem{Krupski2013}
M.~Krupski,
\emph{On the $t$-equivalence relation},
Topology Appl. \textbf{160} (2013), 368--373.

\bibitem{Krupski2016}
M.~Krupski,
\emph{On the weak and pointwise topologies in function spaces},
Rev. R. Acad. Cienc. Exactas F\'{\i}s. Nat. Ser. A Mat. RACSAM
\textbf{110} (2016), 557--563.

\bibitem{KrupskiMarciszewski2016}
M.~Krupski and W.~Marciszewski,
\emph{A metrizable $X$ with $C_p(X)$ not homeomorphic to
$C_p(X)\times C_p(X)$},
Israel J. Math. \textbf{214} (2016), no.~1, 245--258.

\bibitem{KrupskiMarciszewski2017}
M.~Krupski and W.~Marciszewski,
\emph{On the weak and pointwise topologies in function spaces II},
J. Math. Anal. Appl. \textbf{452} (2017), 646--658.

\bibitem{KrupskiKucharskiMarciszewski2025}
M.~Krupski, K.~Kucharski, and W.~Marciszewski,
\emph{Characterizing function spaces which have the property~(B) of
Banakh},
Rev. R. Acad. Cienc. Exactas F\'{\i}s. Nat. Ser. A Mat. RACSAM
\textbf{119} (2025), article~75.

\bibitem{Okunev2011}
O.~Okunev,
\emph{A relation between spaces implied by their $t$-equivalence},
Topology Appl. \textbf{158} (2011), 2158--2164.

\end{thebibliography}
\end{document}